\def\Oc{{\mathcal{O}}}
\def\P{{\mathbb{P}}}
\def\cl{{\colon}}
\def\ra{{\rightarrow}}
\def\dra{{\dashrightarrow}}
\def\mfd{{\mathrm{mfd}}}
\def\PP{{\mathbb{P}}}
\def\ZZ{{\mathbb{Z}}}
\def\RR{{\mathbb{R}}}
\def\Hom{{\mathrm{Hom}}}
\def\Div{{\mathrm{Div}}}
\def\FF{{\mathbb{F}}}
\def\Supp{{\mathrm{Supp}}}
\def\CH{{\mathrm{CH}}}
\def\lw{{\mathrm{\ell w}}}
\def\Pic{{\operatorname{Pic}}}
\def\div{{\operatorname{div}}}
\title{The minimal fibering degree of a toric variety equals the lattice width of its polytope}
\theoremstyle{plain}
\newtheorem{theorem}{Theorem}
\newtheorem{proposition}[theorem]{Proposition}
\newtheorem{corollary}[theorem]{Corollary}
\newtheorem{lemma}[theorem]{Lemma}
\newenvironment{manualtheorem}[1]{%
  \manualtheoreminner
}{\endmanualtheoreminner}
\theoremstyle{definition}
\newtheorem{definition}[theorem]{Definition}
\newtheorem{example}[theorem]{Example}
\theoremstyle{remark}
\newtheorem{remark}[theorem]{Remark}
\numberwithin{figure}{section}
\numberwithin{theorem}{section}
\numberwithin{equation}{section}
\newcommand{\mb}{\mathbb}
\author{Audric Lebovitz}
\address{
	Mathematics Department \\
	University of Michigan \\
	Ann Arbor, MI 48109 \\
	USA
}
\email{alebovit@umich.edu}
\author{David Stapleton}
\address{
	Mathematics Department \\
	University of Michigan \\
	Ann Arbor, MI 48109 \\
	USA}
\email{dajost@umich.edu}
\thanks{During the preparation of this article, the second author was partially supported by NSF grant FRG-1952399.}
\begin{document}

\maketitle

\thispagestyle{empty}

The purpose of this paper is to compute the minimal fibering degree of a pair $(X,L)$ when $X$ is a projective toric variety with a globally generated line bundle $L$ -- we show the minimal fibering degree equals the lattice width of the polytope associated to $(X,L)$.

For any dominant, rational fibration of an $n$-dimensional projective variety $X$:
\[
\Phi\cl X\dra \PP^{n-1}
\]
let $C_\Phi\subset X$ denote the closure of a general fiber. If $L$ is any effective line bundle on $X$, the \textit{degree of $\Phi$ with respect to $L$} is
\[
\deg_L(\Phi) =  \deg([C_\Phi] \cdot L).
\]
The \textit{minimal fibering degree} of a pair $(X,L)$ is
\[
\mfd(X,L) = \min\{ \deg_L(\Phi) \mid  \Phi \cl X\dra \P^{n-1} \text{ is dominant}\}.
\]
The minimal fibering degree was introduced recently (\cite{LSU23}) in order to calculate the degree of irrationality -- a higher dimensional analogue of the gonality of a curve -- of divisors in an ambient variety. Only a few elementary examples of the minimal fibering degree have been computed. In this paper we compute the minimal fibering degree of an arbitrary projective toric variety.

\begin{manualtheorem}{A}\label{thm:A}
Let $(X,L)$ be a projective toric variety with $L$ an ample toric line bundle (or more generally, $L$ may be taken to be simply globally generated). If $P=P(X,L)$ is the lattice polytope associated to $(X,L)$ then
\[
\mfd(X,L) = \lw(P)
\]
(where $\lw(P)$ is the lattice width of the polytope $P$, described below).
\end{manualtheorem}

\noindent This gives a complete answer to \cite[Ques. 1.13(3)]{LSU23}. In the case $L$ is sufficiently ample, Theorem~\ref{thm:A} (together with \cite[Thm. A]{LSU23}) gives a new proof of the computation of the gonality of a curve in a smooth toric surface -- originally proved by Kawaguchi (\cite[Thm. 1.3]{Kaw16} see also \cite{CasCoo,CasCoo2}).

Given a lattice polytope $P\subset \RR^n$ --- i.e. the convex hull of finitely many points $x_1,\dots,x_e\in\ZZ^n$ --- the \textit{lattice width of $P$} (Definition~\ref{def:lw}) computes the minimal width of the image of $P$ under a nonzero linear projection
\[
P\ra \RR
\]
that sends lattice points to lattice points. This is an invariant of lattice polytopes of independent interest (see \cite{CharBuzFesch,CoolsLemmens,DraiMcallNill}). The lattice width has made an appearance in algebraic geometry in several places as well (\cite{CasCoo, CasCoo2, BettiTables,Kaw16,LubSch}). One feature of our proof is that we show how to compute the lattice width of a polytope $P(X,L)$ explicitly in terms of a toric resolution of singularities of $X$.

\begin{center}
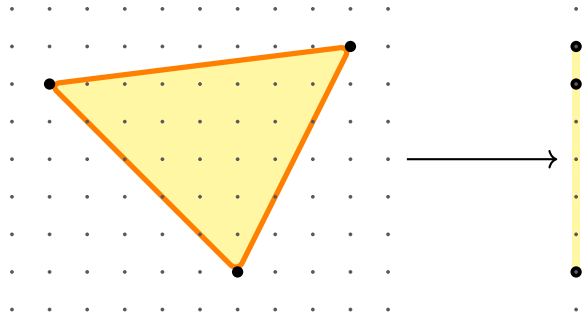
\begin{figure}[H]
\begin{tikzpicture}[scale=0.5]

  \fill[yellow!45] (6,0) -- (9,6) -- (1,5) -- cycle;
  \fill[yellow!45] (14.9,0) -- (15.1,0) -- (15.1,6) -- (14.9,6) -- cycle;

  \draw[line width=.7mm, rounded corners, orange] (6,0) -- (9,6) -- (1,5) -- cycle;
  
  \foreach \x in {0,...,10}{
    \foreach \y in {-1,...,7}{
      \fill[black!65] (\x,\y) circle (0.05);
    }
  }

  \fill[black] (6,0) circle (0.15);
  \fill[black] (9,6) circle (0.15);
  \fill[black] (1,5) circle (0.15);
  
  \fill[black] (15,0) circle (0.15);
  \fill[black] (15,6) circle (0.15);
  \fill[black] (15,5) circle (0.15);
  
  \draw[->,thick] (10.5,3) -- (14.5,3);
  \foreach \y in {-1,...,7}{
    \fill[black!65] (15,\y) circle (0.05);
  }
\end{tikzpicture}
\caption{A lattice projection that computes the lattice width of a polytope corresponding to a singular toric surface.}
\end{figure}
\end{center}

To prove the theorem we first show that any linear projection
\[
P\ra \RR
\]
sending lattice points to lattice points gives rise to an (equivariant) fibration
\[
X\dra \PP^{n-1},
\]
with fiber class a one-parameter curve, such that the degree of the fibration equals the width of the image of $P$ under the projection. This shows $\mfd(X,L) \le \lw(P)$. In the other direction (Lemma~\ref{beatthecurves}), we show that for any fibration 
\[
\Phi\cl X\dra \PP^{n-1}
\]
there is an equivariant fibration $\Phi_T:X\dra\PP^{n-1}$ with $\deg_L(\Phi_T)\le \deg_L(\Phi)$ (and the degree of any equivariant fibration is computed by the lattice width in some direction).

Throughout we work with varieties over an arbitrary algebraically closed field. All varieties are irreducible by convention.

\vspace{12pt}

\noindent\textbf{Acknowledgements.}
We are grateful to Jake Levinson for helpful conversations. This paper is the result of a REU project at the University of Michigan. We are thankful for the encouraging environment provided by the UM math department.

\section{Preliminaries on Toric Varieties} 

The purpose of this section is to recall basic facts about toric varieties including their curves, invariant divisors, and intersection numbers. We also recall the definition of a polytope of a toric pair. We include this for the convenience of the reader, but we do not claim any originality in this section. We refer the reader to Fulton's book (\cite{Fulton}) for more background. Throughout this section, let $X$ denote a projective toric variety of dimension $n$.

First we recall the standard notation. Let $N\cong \ZZ^n$ be a lattice and let $M = \Hom(N, \ZZ)$ the dual lattice (we use the notation $N_\RR=N\otimes \RR$ and $M_\RR = M\otimes \RR$). Then from a fan $\Sigma$ in $N$ one constructs a toric variety $X$ with torus $T\cong \mb{G}_m^n$ (\cite[ch. 1]{Fulton}). For each cone $\sigma\in\Sigma$ there is a associated distinguished point $x_{\sigma}\in X(k)$ (\cite[p. 28]{Fulton}). Let $O_{\sigma}$ denote the torus orbit of $x_{\sigma}$ and $V(\sigma)$ the closure of $O_{\sigma}$ in $X.$ There are bijections:
\begin{enumerate}
\item $\left\{ \begin{array}{c}\text{orbits of the action}\\T\circlearrowright X \end{array}\right\}\leftrightarrow \left\{ O_\sigma \middle| \begin{array}{c}\sigma\in \Sigma\text{ is}\\ \text{a cone}\end{array}\right\}$, and 
\item $\left\{\begin{array}{c} T-\text{invariant, closed}\\\text{subvarieties of $X$}\end{array}\right\}\leftrightarrow \left\{ V(\sigma) \middle| \begin{array}{c}\sigma\in \Sigma\text{ is}\\ \text{a cone}\end{array}\right\}.$
\end{enumerate}
The bijection (2) satisfies $\textrm{codim}(V(\sigma)) = \textrm{dim}(\sigma).$

\subsection{Cartier divisors on toric varieties} Let $D_1,\dots, D_r$ be the irreducible $T$-invariant divisors of $X$ (corresponding to the 1-dimensional cones $\tau_1,\dots,\tau_r\in \Sigma$). There is a map: 
\[
    M\ra \oplus \ZZ D_i \quad 
    \left( m \mapsto \div(\chi^m) = \sum_{i=1}^r \langle m, v_i \rangle D_i\right)
\]
(\cite[p. 61]{Fulton}), and there are standard exact sequences:
\[
0\to M\xrightarrow{\div} \oplus \ZZ D_i \to \text{Cl}(X)\to 0,
\]
and
\[
0\to M \xrightarrow{\div} \Div_T(X)\to \Pic(X)\to 0
\]
(where $\Div_T(X)$ is the subgroup of Cartier divisors in $\oplus\ZZ D_i$ \cite[p. 63]{Fulton}).

A convenient representation of the data of a torus-invariant Cartier divisor is that of its support function.

\begin{definition}[{\cite[p. 66]{Fulton}}]
Let $D$ be a torus-invariant Cartier divisor. The \textit{support function of $D$} is a piecewise linear function:
\[
\psi_D:N_{\mb{R}}\to \mb{R}
\]
defined as follows: for each cone $\sigma\in \Sigma$, $D|_{U_{\sigma}} = \div(\chi^{-m_{\sigma}})$ for some  $m_{\sigma}\in M$ and set
\[
\psi_D|_{\sigma} = \langle m_{\sigma}, \cdot \rangle.
\]
\end{definition}

\begin{remark}\label{rem:supportfunctions}
   By the definitions, $\psi_D|_N$ is integer valued. It can also be checked that $\psi_D$ is continuous. In fact, any continuous function
   \[
   \psi\cl N_{\mb{R}}\ra \RR
   \]
   which is linear on each cone and integral on $N$ arises uniquely as the support function of some torus-invariant Cartier divisor $D=a_1D_1+\cdots +a_rD_r$. Specifically, if $v_i
   \in N$ is the primitive lattice generator of the ray associated to the divisor $D_i$ then
   \[
   a_i = -\psi(v_i).
   \]
   Under this correspondence, the globally linear functions correspond to the globally principal $T$-invariant divisors.
\end{remark}

\begin{example}
    When $X = \PP^1$, the support function associated to the divisor $D = a_1 [0] + a_2 [\infty]$ is the function
    \begin{align*}
        \psi_D\cl\RR &\to \RR \\
        v &\mapsto \begin{cases}
            - a_1 v & v \ge 0 \\
            a_2 v & v \le 0
        \end{cases}
    \end{align*} and $\psi_D$ is linear if and only if $a_1 + a_2=0.$
\end{example}

\begin{definition}
If $L = \Oc(a_1D_1+\cdots a_r D_r)$ is a globally generated $T$-equivariant line bundle on $X$ then \textit{the polytope associated to $(X,L)$} is by definition
\[
P(X,L) :=\{ m\in M_\RR \mid \langle m, v_i \rangle \ge -a_i\}
\]
where $v_i\in N$ is the integral generator of the ray in $N_\RR$ associated to $D_i$. Alternatively, one can define the polytope as follows. For each torus invariant point
\[
x_1,\dots,x_m\in X^T
\]
the restriction $L|_{x_i}$ gives a 1-dimensional $T$-representation, to which we can associate a character $m_i\in M$. Then
\[
P(X,L) = \left(\begin{array}{c}
\text{the convex hull}\\
\text{of }m_i\in M
\end{array}\right)\subset M_\RR.
\]
\end{definition}

\begin{center}

\begin{figure}[H]

\begin{tabular}{ccc}

\begin{minipage}{3cm}
\begin{tikzpicture}[scale=0.75]
  \fill[yellow!30] (0,0) -- (1,0) -- (1,2) -- cycle;

  \draw[line width=.7mm, rounded corners, orange]  (0,0) -- (1,0) -- (1,2) -- cycle;
  
  \foreach \x in {-1,...,2}{
    \foreach \y in {-1,...,3}{
      \fill[black!65] (\x,\y) circle (0.05);
    }
  }

  \fill[black] (0,0) circle (0.15);
  \fill[black] (1,0) circle (0.15);
  \fill[black] (1,2) circle (0.15);
 
\end{tikzpicture}
\end{minipage}

&

\begin{minipage}{3cm}
\begin{tikzpicture}[scale=0.75]
  \fill[yellow!30] (0,0) -- (0,-1) -- (1,-1) -- (1,2) -- cycle;

  \draw[line width=.7mm, rounded corners, orange]  (0,0) -- (0,-1) -- (1,-1) -- (1,2) -- cycle;
  
  \foreach \x in {-1,...,2}{
    \foreach \y in {-2,...,3}{
      \fill[black!65] (\x,\y) circle (0.05);
    }
  }

  \fill[black] (0,0) circle (0.15);
  \fill[black] (0,-1) circle (0.15);
  \fill[black] (1,-1) circle (0.15);
  \fill[black] (1,2) circle (0.15);
 
\end{tikzpicture}
\end{minipage}

&

\begin{minipage}{4cm}
\begin{tikzpicture}[scale=0.75]
  \fill[yellow!30] (0,0) -- (3,0) -- (0,3) -- cycle;

  \draw[line width=.7mm, orange]  (0,0) -- (3,0) -- (0,3) -- cycle;
  
  \foreach \x in {-1,...,4}{
    \foreach \y in {-1,...,4}{
      \fill[black!65] (\x,\y) circle (0.05);
    }
  }

  \fill[black] (0,0) circle (0.15);
  \fill[black] (3,0) circle (0.15);
  \fill[black] (0,3) circle (0.15);
\end{tikzpicture}
\end{minipage}
\end{tabular}

\caption{Polytopes of a cone over a conic, its resolution $\FF_2$, and $(\PP^2,\Oc(3))$.}

\end{figure}

\end{center}

\begin{remark}\label{rem:globalsections}
    With $L$ as above, the lattice points of $P(X,L)$ correspond to a basis for the global sections of $L$. More precisely, each $m\in P(X,L)\cap M$ is a character which gives rise to a rational function $\chi^m$ on $X$. We have
    \[
    \div_L(\chi^m) = \div(\chi^m) + \sum_{i=1}^r a_i D_i = \sum_{i=1}^r (\langle m, v_i \rangle + a_i) D_i
    \]
    is effective since $m\in P(X,L)$ and so $\chi^m$ is a global section of $L$. Furthermore, these sections $\chi^m$ form a basis for $H^0(X,L).$ In the opposite direction, given a $T$-equivariant, globally generated line bundle $L$, the characters that appear in the $T$-representation $H^0(X,L)$ give the integral points of the polytope $P(X,L)\subset M_\RR$.
\end{remark}

\begin{remark}
There is a surjective map
\begin{equation}\label{eqn:facemap}
f\cl \left\{ \begin{array}{c}T\text{-invariant, closed}\\\text{subvarieties of }X \end{array}\right\} \ra \left\{ \begin{array}{c}\text{faces of}\\P(X,L) \end{array}\right\}
\end{equation}
that respects the partial ordering given by inclusion. It sends a divisor $D_i$ to the face:
\[
P(X,L) \cap (\langle \cdot ,v_i\rangle = 0)
\]
and this determines the map as every $T$-invariant, closed subvariety is an intersection of divisors. In the case $L$ is very ample this map is a bijection. Finallly, if $V(\sigma)$ is a closed $T$-invariant subvariety and $m\in P(X,L)$ is a lattice point then $\div(\chi^m)\in H^0(X,L)$ and
\begin{equation}\label{eqn:vanishinglocus}
V(\sigma)\subset \Supp(\div(\chi^m)) \iff m\not\in f(V(\sigma)).
\end{equation}
\end{remark}

\begin{remark}[{\cite[\S3.4]{Fulton}}]\label{rem:polytope_to_support}
If $D$ is a $T$-invariant divisor with a base point free linear system one can recover the support function $\psi_D$ from the polytope $P = P(X,\Oc(D))$ and visa versa:
\[
P = \{m\in M_{\RR} \mid \langle m, \cdot \rangle \ge \psi_D\}, \text{ and }\psi_D(v) = \min_{m\in P} \langle m, v\rangle.
\]
\end{remark}

\subsection{Toric rational maps} Let $X$ and $Y$ be toric varieties with tori $T_X$ and $T_Y$, respectively.

\begin{definition}
    A \textit{toric morphism} (resp. a \textit{toric rational map}) is a regular (resp. rational) map
    \[
    X\ra Y \quad (\text{resp. }X\dra Y)
    \]
    that extends a group homomorphism $T_X\to T_Y$.
\end{definition}

\begin{remark}
    There is a correspondence between toric rational maps $X\dra Y$ and maps of lattices $N_X\to N_Y$. Given a toric rational map, its maximum domain of definition can be determined from the map on lattices: it is the union of all $U_{\sigma}$ where $\sigma$ is a cone of $X$ mapping into a cone of $Y.$ So, under this correspondence, the regular maps (i.e. classes of toric morphisms) correspond to fan-preserving maps of lattices.
\end{remark}

\begin{definition}
    A toric rational map $\Phi:X\dra Y$ is called a \textit{toric fibration} if it is dominant.
\end{definition}

\begin{remark}
    If $\Phi:X\dra Y$ is a toric fibration then the restriction to the tori $\Phi|_{T_X}:T_X\to T_Y$ is surjective.
\end{remark}

\begin{remark}
    One could equivalently define a toric fibration to be a toric rational map $X\dra Y$ so that the the map on lattices $\overline{\Phi}:N_X\to N_Y$ satisfies $\textrm{rank}(\overline{\Phi}) = \textrm{rank}(N_Y).$
\end{remark}

\subsection{Functoriality of divisors} Let $\pi:X\to Y$ be a toric morphism. There is a pullback:
\[
\pi^*\cl \Div_T(Y) \ra \Div_T(X).
\]
First we describe how to compute this pullback in terms of support functions. Let $D\subset Y$ be a torus-invariant Cartier divisor. The morphism $\pi:X\to Y$ induces a map on lattices
\[
\overline{\pi}:(N_X)_{\mb{R}}\to (N_Y)_{\mb{R}}.
\]
The support function
\[
\psi_D:(N_Y)_{\mb{R}}\to \mb{R}.
\]
on $Y$ pulls back to a support function
\[
\psi:=\overline{\pi} \circ \psi_D:(N_X)_{\mb{R}}\to \mb{R}
\]
on $X.$ Then (by Remark~\ref{rem:supportfunctions}) there exists a unique torus-invariant Cartier divisor $D'$ on $X$ so that $\psi = \psi_{D'}$ and we have $D'= \pi^* D$.

When $L = \Oc(D)$ is globally generated, we can describe this pullback in terms of polytopes. The map $\pi$ induces a map on the dual lattices
\[
\overline{\pi}^\vee\cl M_Y\to M_X
\] 
that sends the lattice points of $P(Y,L)$ to the lattice points of $P(X, \pi^* L)$. This  corresponds to the pullback of global sections (as in Remark~\ref{rem:globalsections}). This extends to a map on the polytopes:
\[
\pi_P\cl P(Y,L) \ra P(X,\pi^*(L))
\]
In fact, as $L$ is globally generated, the map $\pi_P$ is surjective.

\begin{proposition}
If $\pi:X\to Y$ is a toric morphism and $L$ is a globally generated $T$-line bundle on $Y$ then
$\pi_P\cl P(Y,L)\ra P(X,\pi^*(L))$ is surjective.
\end{proposition}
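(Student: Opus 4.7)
The plan is to invoke the support-function/polytope duality of Remark~\ref{rem:polytope_to_support}. Since $L$ is globally generated on $Y$, so is $\pi^*L$ on $X$ (pullback of a globally generated line bundle is globally generated), and hence Remark~\ref{rem:polytope_to_support} applies on both sides: each polytope is recoverable from its ``support function'' $\psi(v) = \min_{m}\langle m,v\rangle$. The strategy is to show that $\pi_P(P(Y,L))$ and $P(X,\pi^*L)$ have the same such function on $(N_X)_\RR$, and conclude that they coincide.

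First I would record the pullback formula $\psi_{\pi^*L} = \psi_L \circ \overline{\pi}$ on $(N_X)_\RR$, which is exactly the content of the discussion on functoriality of support functions immediately preceding the proposition. Next I would compute the support function of the image polytope. Since $\pi_P$ is by definition the restriction of $\overline{\pi}^\vee$ to $P(Y,L)$, the dual pairing gives, for every $v\in(N_X)_\RR$,
\[
\min_{m'\in\pi_P(P(Y,L))}\langle m',v\rangle \,=\, \min_{m\in P(Y,L)}\langle \overline{\pi}^\vee(m),v\rangle \,=\, \min_{m\in P(Y,L)}\langle m,\overline{\pi}(v)\rangle \,=\, \psi_L(\overline{\pi}(v)) \,=\, \psi_{\pi^*L}(v),
\]
where the third equality is Remark~\ref{rem:polytope_to_support} applied to $(Y,L)$ and the last is the pullback formula.

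Now $\pi_P(P(Y,L))$ is a compact convex polytope in $(M_X)_\RR$, being the linear image of a polytope, and by the calculation above it has the same support function on $(N_X)_\RR$ as $P(X,\pi^*L)$. Both polytopes therefore equal $\{m'\in(M_X)_\RR : \langle m',v\rangle\ge \psi_{\pi^*L}(v)\ \forall v\}$: for $P(X,\pi^*L)$ this is Remark~\ref{rem:polytope_to_support} (available because $\pi^*L$ is globally generated), and for $\pi_P(P(Y,L))$ it is the standard fact that a compact convex set is the intersection of its supporting half-spaces. Surjectivity follows. I do not anticipate a serious obstacle; the argument is essentially bookkeeping with the dual pairing $\langle \overline{\pi}^\vee(\cdot),\cdot\rangle = \langle \cdot,\overline{\pi}(\cdot)\rangle$, and the one point worth monitoring is the global generation of $\pi^*L$, which is what makes Remark~\ref{rem:polytope_to_support} applicable on the $X$-side.
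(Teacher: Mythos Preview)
Your argument is correct and takes a genuinely different route from the paper's. The paper argues geometrically via torus-fixed points: it reduces to showing that every vertex $m$ of $P(X,\pi^*L)$ lies in the image, picks a fixed point $x\in X^T$ with $f(x)=m$, and then uses global generation on $Y$ to find a section $\chi^{m'}$ not vanishing at $\pi(x)$, whence $\pi_P(m')=m$. Your approach is purely convex-geometric: you use the support-function formula $\psi_{\pi^*D}=\psi_D\circ\overline{\pi}$ together with Remark~\ref{rem:polytope_to_support} to show that the image polytope and $P(X,\pi^*L)$ have the same ``$\min$'' support function on $(N_X)_\RR$, and then invoke the standard fact that a compact convex set is determined by this function. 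This is cleaner and in fact yields the equality $\pi_P(P(Y,L))=P(X,\pi^*L)$ in one stroke, without separately checking the containment $\pi_P(P(Y,L))\subseteq P(X,\pi^*L)$ that the paper takes for granted when defining $\pi_P$. The paper's argument, on the other hand, makes more explicit contact with the dictionary between polytope faces and torus orbits (Equations~\eqref{eqn:facemap} and~\eqref{eqn:vanishinglocus}), which is thematically closer to how the polytope is used elsewhere in the paper. The one point you flagged---that $\pi^*L$ is globally generated so Remark~\ref{rem:polytope_to_support} applies on $X$---is indeed the only thing to check, and it holds.
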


\begin{proof}
The polytope $P(X,\pi^*(L))$ is the convex hull of its vertices, which are lattice points. As $\pi_P$ is linear and $P(Y,L)$ is convex, it suffices to show that every vertex of $P(X,\pi^*(L))$ is in the image of $\pi_P$. For every vertex $m\in P(X,\pi^*(L))$ there is a point $x\in X^T$ such that the face map (Equation~\ref{eqn:facemap}) satisfies
\[
f(x) = m.
\]
In fact, (by Equation~\ref{eqn:vanishinglocus}) $\chi^m$ is the unique character of $H^0(X,\pi^*L)$ that doesn't vanish at $x$. The map
\[
\pi\cl X\ra Y
\]
sends $x$ to a distinguished point of $Y$ corresponding to a $T$-invariant closed subvariety $V(\sigma)$ in $Y$. As $L$ is globally generated, there is necessarily a point $m'\in P(Y,L)$ such that
\[
\pi(x)\not\in\Supp(\div(\chi^{m'}))
\]
(any $m'\in f(V(\sigma))$ suffices). Therefore, $\pi^*(\chi^{m'})$ does not vanish on $x$ and thus
\[
\pi _P(m') = m.\qedhere
\]
\end{proof}

\begin{example}
    Although the map on polytopes is surjective, the map on lattice points
    \[
    \pi_P\cl P(Y,L)\cap M_Y\ra P(X,\pi^*(L))\cap M_X
    \]
    is not surjective in general. Consider the toric map
    \begin{align*}
        \pi\cl \PP^1 &\to\PP^2 \\
        [x_0,x_1]&\mapsto [x_0^3:x_0^2 x_1:x_1^3].
    \end{align*}
    If $L = \Oc_{\PP^2}(1)$ then $P(\PP^2,L)$ has 3 lattice points but $P(\PP^1,\pi^*L)$ has 4 lattice points.
    \vspace{12pt}

\begin{center}
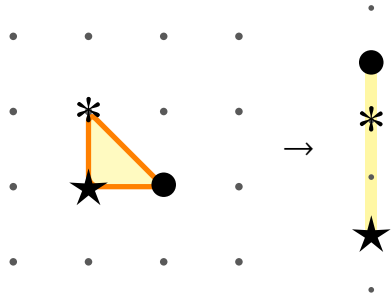
\begin{figure}[H]
\begin{tabular}{cc}

\begin{minipage}{3.5cm}
\begin{tikzpicture}[scale=1]
  \fill[yellow!30] (0,0) -- (1,0) -- (0,1) -- cycle;

  \draw[line width=.7mm, rounded corners, orange]  (0,0) -- (1,0) -- (0,1) -- cycle;
  
  \foreach \x in {-1,...,2}{
    \foreach \y in {-1,...,2}{
      \fill[black!65] (\x,\y) circle (0.05);
    }
  }

  \node[font = {\Huge\bfseries\sffamily}] at (0, 0) {$\star$};
  \node[font = {\Huge\bfseries\sffamily}] at (1, 0) {$\bullet$};
  \node[font = {\Huge\bfseries\sffamily}] at (0, 1) {$\ast$};
 
\end{tikzpicture}
\end{minipage}
$\ra$

&

\begin{minipage}{1cm}
\begin{tikzpicture}[scale=.75]
  
  \fill[yellow!45] (-.1,0) -- (.1,0) -- (.1,3) -- (-.1,3) -- cycle;

  \foreach \y in {-1,...,4}{
    \fill[black!65] (0,\y) circle (0.05);
  }

  \node[font = {\Huge\bfseries\sffamily}] at (0, 0) {$\star$};
  \node[font = {\Huge\bfseries\sffamily}] at (0, 3) {$\bullet$};
  \node[font = {\Huge\bfseries\sffamily}] at (0, 2) {$\ast$};
 
\end{tikzpicture}
\end{minipage}
\end{tabular}
\caption{A depiction of the map on polytopes from the example.}
\end{figure}
\end{center}
\end{example}

\subsection{Intersection theory of one-parameter curves and torus-invariant divisors} 

\begin{definition}
    Let $0\neq v\in N$. Then the \textit{one-parameter subgroup associated to $v$}, is the map 
    \[
    \phi_v\cl \mb{G}_m\to T
    \]
    induced by the map of lattices
    \begin{align*}
    \overline{\phi_v}\cl\ZZ &\to N \\
    1&\mapsto v.
    \end{align*}
\end{definition}

\begin{definition}
    The \textit{one-parameter curve associated to $v$}, is the induced map $\phi_v\cl \PP^1\ra X$ (which we also denote by $\phi_v$ by abuse of notation) and the cycle associated to $v$ is
    \[
    [C_v]:={(\phi_v)}_*([\PP^1]) \in Z_1(X).
    \]
\end{definition} 

For $v\in N$ nonzero and $D$ a torus-invariant Cartier divisor, we recall how to compute the intersection number $[C_v]\cdot D$. Let $\sigma_0$ be the smallest cone of $X$ containing $v$, and $\sigma_{\infty}$ be the smallest cone of $X$ containing $-v.$ Then the images of $0$ and $\infty$ are the distinguished points $x_{\sigma_0}$ and $x_{\sigma_{\infty}}$, respectively.

\begin{proposition}\label{intersectionNumbers}
    Let $v\in N$ be nonzero and $[C_v]$ the associated cycle such that the limits of $\phi_v$ at $0$ (resp. $\infty$) is $x_{\sigma_0}$ (resp. $x_{\sigma_\infty}$). If $D\subset X$ is a torus-invariant Cartier divisor with support function $\psi_D$ then the local intersection numbers at $x_{\sigma_0}$ and $x_{\sigma_\infty}$ are:
    \[
    ([C_v] \cdot D)_{x_{\sigma_0}}  = - \psi_D(v) \quad\text{ and }\quad ([C_v] \cdot D)_{x_{\sigma_\infty}} = -\psi_D(-v).
    \]
    Consequently,
    $$[C_v] \cdot D = -(\psi_D(v) + \psi_D(-v)).$$
\end{proposition}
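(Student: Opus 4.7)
The plan is to use the projection formula to convert the intersection number on $X$ into a degree on $\PP^1$, and then to evaluate this degree as a sum of local orders of vanishing of pulled-back local equations.

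First I would apply the projection formula,
\[
[C_v]\cdot D \;=\; (\phi_v)_*[\PP^1]\cdot D \;=\; \deg_{\PP^1}\bigl(\phi_v^*\Oc_X(D)\bigr),
\]
and expand the right-hand side as a sum $\sum_{p\in\PP^1}\mathrm{ord}_p(\phi_v^*f_p)$, where $f_p$ is a local defining equation of $D$ near $\phi_v(p)$. Because $\phi_v(\mb{G}_m)\subset T$ and the torus $T$ is disjoint from every torus-invariant divisor, only $p\in\{0,\infty\}$ contribute; and since $0$ and $\infty$ are the unique preimages of $x_{\sigma_0}$ and $x_{\sigma_\infty}$ respectively, these two local orders are by the pushforward interpretation exactly the local intersection numbers $([C_v]\cdot D)_{x_{\sigma_0}}$ and $([C_v]\cdot D)_{x_{\sigma_\infty}}$ of the proposition.

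Next I would do the local computation at $p=0$ directly. On the affine chart $U_{\sigma_0}$ containing $\phi_v(0)=x_{\sigma_0}$, the divisor $D$ is cut out by $\chi^{-m_{\sigma_0}}$ for some $m_{\sigma_0}\in M$ with $\psi_D|_{\sigma_0}=\langle m_{\sigma_0},\cdot\rangle$. The map $\phi_v$ is dual on characters to the lattice map $M\to\ZZ$, $m\mapsto\langle m,v\rangle$, so in the standard coordinate $t$ on $\mb{G}_m$ one has $\phi_v^*\chi^m = t^{\langle m,v\rangle}$. Hence
\[
\phi_v^*\chi^{-m_{\sigma_0}} \;=\; t^{-\langle m_{\sigma_0},v\rangle},
\]
whose order at $t=0$ is $-\langle m_{\sigma_0},v\rangle = -\psi_D(v)$, using $v\in\sigma_0$. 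The analogous calculation at $p=\infty$ in the coordinate $s=1/t$, using $-v\in\sigma_\infty$, yields local order $\langle m_{\sigma_\infty},v\rangle = -\psi_D(-v)$. Summing the two contributions gives $[C_v]\cdot D = -(\psi_D(v)+\psi_D(-v))$.

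The main hazard is sign-bookkeeping: the definition $D|_{U_\sigma}=\div(\chi^{-m_\sigma})$ introduces one sign, the duality $\phi_v^*\chi^m = t^{\langle m,v\rangle}$ couples things to $v$, and the passage from $0$ to $\infty$ flips another sign via $s=1/t$. Each is routine in isolation, but they must all be correctly aligned to produce the symmetric answer in $\pm v$. Once the signs are in order, the proof is essentially a one-paragraph unpacking of Remark~\ref{rem:supportfunctions}.
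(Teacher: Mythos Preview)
Your proposal is correct and takes essentially the same approach as the paper: both compute $\phi_v^*\Oc_X(D)$ on $\PP^1$ and read off the local orders at $0$ and $\infty$. The only difference is presentational---the paper phrases the pullback via support functions (composing $\psi_D$ with the lattice map $\overline{\phi_v}$ and invoking Remark~\ref{rem:supportfunctions}), whereas you compute directly with the local equations $\chi^{-m_\sigma}$; the underlying calculation is identical.
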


\begin{proof} We compute $\phi_v^* \Oc_X(D)$ by pulling back the support function $\psi_D.$  The support function $\psi_D:N_{\mb{R}}\to \mb{R}$ pulls back along $\overline{\psi_v}$ to a support function
\begin{align*}
\psi_D \circ \overline{\psi_v}:\mb{R}&\to \mb{R} \\
1&\mapsto \psi_D(v).
\end{align*} 

This support function corresponds to a torus-invariant Cartier divisor $D'$ on $\PP^1$ such that $\psi_v^* \Oc_X(D) \cong \Oc_{\PP^1}(D').$ It follows that
$$([C_v] \cdot D)_{\sigma_0} = \textrm{ord}_0(D') = - \psi_{D'}(1) = -\psi_D(v)$$ and
$$([C_v] \cdot D)_{\sigma_{\infty}} = \textrm{ord}_{\infty}(D') = - \psi_{D'}(-1) = -\psi_D(-v)$$ as desired.
\end{proof}

\begin{corollary}\label{equalsone}
    If $D$ is a torus-invariant prime divisor and $v$ the primitive generator of the ray in the fan determined by $D$, then $[C_{v}] \cdot D = 1.$
\end{corollary}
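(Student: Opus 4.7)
The plan is to reduce to Proposition~\ref{intersectionNumbers} and then carefully evaluate the support function of $D$ at $v$ and $-v$. By that proposition,
\[
[C_v]\cdot D = -(\psi_D(v) + \psi_D(-v)),
\]
so it suffices to compute each of the two terms on the right.

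First, I would compute $\psi_D(v)$. Since $D=D_i$ is the prime $T$-invariant divisor corresponding to the ray $\tau_i\subset N_\RR$ with primitive generator $v = v_i$, the description of the correspondence between torus-invariant Cartier divisors and support functions given in Remark~\ref{rem:supportfunctions} says that if $D = \sum_j a_j D_j$ then $a_j = -\psi_D(v_j)$. Here the coefficients are $a_j = \delta_{ij}$, so $\psi_D(v_i) = -1$ and $\psi_D(v_j) = 0$ for every other ray generator $v_j$ of $\Sigma$.

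Next, I would show $\psi_D(-v) = 0$. Since $X$ is projective, the fan $\Sigma$ is complete, so $-v$ lies in some cone $\sigma\in\Sigma$. The cones of a fan are strongly convex (they contain no nonzero linear subspace), so $\sigma$ cannot simultaneously contain both $v$ and $-v$; in particular, the ray $\tau_i = \RR_{\ge 0}\cdot v$ is not a face of $\sigma$. Thus $v_i$ is not among the primitive generators $w_1,\dots,w_k$ of the rays of $\sigma$. Writing $-v = \sum_{j} c_j w_j$ with $c_j\ge 0$ and using linearity of $\psi_D$ on $\sigma$,
\[
\psi_D(-v) = \sum_j c_j\,\psi_D(w_j) = 0,
\]
since $\psi_D$ vanishes on the primitive generator of every ray of $\Sigma$ other than $\tau_i$.

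Combining these two computations gives $[C_v]\cdot D = -(-1 + 0) = 1$. The only mildly subtle step is the vanishing $\psi_D(-v)=0$, which relies on strong convexity of the cones of $\Sigma$ together with completeness of $\Sigma$ (so that $-v$ does lie in some cone where we can evaluate $\psi_D$ by linearity); the rest is a direct application of the previously established correspondence between $T$-invariant Cartier divisors and their support functions.
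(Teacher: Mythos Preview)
Your proof is correct and follows essentially the same approach as the paper: both reduce to Proposition~\ref{intersectionNumbers}, use Remark~\ref{rem:supportfunctions} to get $\psi_D(v)=-1$, and invoke strong convexity of the cone containing $-v$ to conclude $\psi_D(-v)=0$. You spell out the vanishing $\psi_D|_\sigma\equiv 0$ more explicitly (via the nonnegative combination of the ray generators of $\sigma$), whereas the paper simply asserts it, but the argument is the same.
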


\begin{proof}
In this case, $\sigma_0$ is the ray determined by $v$. The cone $\sigma_{\infty}$ contains $-v$, so it cannot contain the ray determined by $D$ as $\sigma_\infty$ is strongly convex. Then $\psi_D|_{\sigma_\infty}\equiv 0$ and thus (by Remark~\ref{rem:supportfunctions})
\[
[C_v]\cdot D = -\psi_D(v) = 1.\qedhere
\]
\end{proof}

\section{Minimal fibering degree of toric varieties}

This section contains the proof of the main theorem. To prove Theorem~\ref{thm:A} we show that the minimal fibering degree can be computed by torus equivariant fibrations. We then relate the degrees of these torus equivariant fibrations to the widths of lattice projections of the polytope of $X$.

\begin{definition}
Let $X$ be an $n$-dimensional projective variety over $k$ an algebraically closed field with line bundle $L$. Consider a dominant rational map:
\[
\Phi\cl X\dra \PP^{n-1}.
\]
Let $C_\Phi$ be the closure of a general fiber of $\Phi$. The \textit{$L$-degree of $\Phi$} is
\[
\deg_L(\Phi) = \deg([C_b]\cdot L).
\]
\end{definition}

\begin{definition}[~{\cite[Def. 1.4]{LSU23}}]
With $(X,L)$ as above, if $L$ is effective, then the \textit{minimal fibering degree} of the pair $(X,L)$ is
\[
\mfd(X,L) :=\min\{ \deg_L(\Phi) \mid \Phi \cl X\dra \PP^{n-1}\}.
\]
\end{definition}

\begin{lemma}\label{lem:vmapsexist}
Let $X$ be a smooth $n$-dimensional projective toric variety.
\begin{enumerate}
\item For every one-parameter curve $\phi_v\cl \PP^1\ra X$ there is a torus equivariant fibration
\[
\Phi_v\cl X\dra \PP^{n-1}
\]
(where $\PP^{n-1}$ has the standard torus action) such that $[C_{\Phi_v}] \equiv_{\mathrm{rat}} [C_v]\in \CH_1(X)$.
\item Likewise, for any torus equivariant fibration:
\[
\Phi\cl X\dra \PP^{n-1}
\]
there exists a one-parameter curve $\phi_v$ such that $[C_v] \equiv_{\mathrm{rat}} [C_\Phi]$.
\end{enumerate}
\end{lemma}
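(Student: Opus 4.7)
The plan is to exploit the bijection of Section~1.2 between toric rational maps $\Phi\cl X\dra Y$ and lattice homomorphisms $\overline{\Phi}\cl N\to N_Y$. Both parts reduce to a single key computation: for any rank-$(n-1)$ lattice map $\overline{\Phi}\cl N\to\ZZ^{n-1}$ whose kernel is generated by a primitive $w\in N$ and whose image has index $d$ in $\ZZ^{n-1}$, the associated toric fibration $\Phi\cl X\dra \PP^{n-1}$ satisfies $[C_\Phi] = d\,[C_w] = [C_{dw}]$ in $\CH_1(X)$.

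For part~(1), given nonzero $v\in N$, write $v=dw$ with $w$ primitive and $d\ge 1$. Since the lattice map $\ZZ\to N$ sending $1\mapsto v$ factors as $\ZZ\xrightarrow{\cdot d}\ZZ\xrightarrow{w}N$, the corresponding one-parameter map factors as $\phi_v=\phi_w\circ[d]\cl\PP^1\to\PP^1\to X$, giving $[C_v] = (\phi_v)_*[\PP^1] = d\,[C_w]$ by pushforward functoriality. To build $\Phi_v$, I would extend $w$ to a $\ZZ$-basis $\{w, u_1, \dots, u_{n-1}\}$ of $N$ and define a lattice map $\overline{\Phi_v}\cl N\to\ZZ^{n-1}$ by $w\mapsto 0$, $u_1\mapsto d\,e_1$, and $u_i\mapsto e_i$ for $i\ge 2$; this has rank $n-1$, kernel $\ZZ w$, and image of index $d$. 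For part~(2), any torus-equivariant fibration $\Phi\cl X\dra\PP^{n-1}$ corresponds to some rank-$(n-1)$ lattice map $\overline{\Phi}\cl N \to \ZZ^{n-1}$, and one sets $w$ to be a primitive generator of $\ker(\overline{\Phi})\cong\ZZ$, $d=[\ZZ^{n-1}:\mathrm{im}(\overline{\Phi})]$, and $v=dw$.

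It remains to carry out the key computation $[C_\Phi] = d\,[C_w]$. The restriction $T\to T_{\PP^{n-1}}$ factors through the quotient $T\to T/T_w$ (a principal $T_w$-bundle, where $T_w = \phi_w(\mb{G}_m)$) followed by an isogeny of degree $d$, so the scheme-theoretic fiber over a general closed point of $T_{\PP^{n-1}}$ consists of $d$ translates of $T_w$ (counted with multiplicity in positive characteristic). Closing up in $X$ and noting that any two $T$-translates of $C_w$ are rationally equivalent (via the $T$-action on $X$) gives $[C_\Phi] = d\,[C_w]$. I expect the main technical subtlety to be the multiplicity count when the isogeny is inseparable and the kernel group scheme is non-reduced; this can be handled either by applying Smith normal form to $\overline{\Phi}$ and computing lengths of the resulting local Artinian rings, or, more cleanly, by verifying the rational equivalence numerically by intersecting both $[C_{\Phi_v}]$ and $[C_v]$ with each torus-invariant Cartier divisor and matching the values via Proposition~\ref{intersectionNumbers}.
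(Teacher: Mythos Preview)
Your proposal is correct and follows essentially the same route as the paper: construct the fibration from the lattice data (your quotient $N\to\ZZ^{n-1}$ is dual to the paper's choice of characters spanning $v^\perp\subset M$), identify the general fiber as a $T$-translate of the one-parameter curve, and handle the non-primitive case via a degree-$d$ isogeny on the target torus. The one minor packaging difference is that the paper verifies the equality numerically against $T$-invariant divisors and then invokes ``numerical $=$ rational equivalence on smooth projective toric varieties,'' whereas you argue rational equivalence directly from the rationality of $T$; your version is slightly more self-contained and also makes the positive-characteristic multiplicity count explicit, which the paper glosses over.
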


\begin{proof}
For (1), first consider the case when $v$ is primitive. Choose a basis $u_1,\dots,u_{n-1}$ for $v^\perp$ in $M$. These characters define a dominant map
\begin{align*}
T&\ra \mb{G}_m^{n-1}\\
t&\mapsto (\chi^{u_1}(t),\dots,\chi^{u_{n-1}}(t))
\end{align*}
that uniquely extends to a toric fibration $\Phi\cl X\dra \PP^{n-1}$. We claim that for every $T$-divisor $D\subset X$:
\[
[C_\Phi]\cdot D = [C_v]\cdot D.
\]

The closure of the fiber over $1\in \mb{G}^{n-1}$ has class $[C_v]$ (this uses that $v$ is primitive). Now, consider a general point $b\in \mb{G}^{n-1}\subset \PP^{n-1}$. Let $t\in T$ be a point in $\Phi^{-1}(b)$. So
\[
\overline{\Phi^{-1}(\Phi(t))} = C_\Phi.
\]
Then
\begin{align*}
[C_v]\cdot D & = [t^{-1}(C_\Phi)]\cdot D\\
 &= [t(t^{-1}(C_\Phi))]\cdot t (D)& \text{ (as $t$ gives an automorphism of $X$)}\\
 &= [C_\Phi] \cdot D. & \text{ (as $D$ is $T$-invariant)}
\end{align*}
As numerical equivalence and rational equivalence coincide on smooth toric varieties this implies $[C_\Phi]\equiv_{\mathrm{rat}} [C_v]$.

The case that $v=m v_0$ is not primitive (with $m>1$ and $v_0$ primitive) is resolved by postcomposing the above map
\[
T\ra \mb{G}_m^{n-1}
\]
with a degree $m$ homomorphism $\mb{G}_m^{n-1}\ra \mb{G}_m^{n-1}.$

For (2), such a torus equivariant fibration gives rise to a one-parameter subgroup (the reduced connected component of the identity in the kernel of $\Phi|_T$). Consider the induced vector $v_0\in N$. By equivariance, there is a factorization
\[
\begin{tikzcd}
T \arrow[r,"\Phi_{v_0}|_T"] \arrow[dr,swap, "\Phi|_T"]& \mb{G}_m^{n-1}\arrow[d,"\xi"]\\
&\mb{G}_m^{n-1}.
\end{tikzcd}
\]
By a computation
\[
[C_\Phi] \equiv_{\mathrm{rat}} \deg(\xi)[C_{v_0}] \equiv_{\mathrm{rat}} [C_{\deg(\xi)\cdot {v_0}}],
\]
so we may set $v=\deg(\xi) v_0$.
\end{proof}

\begin{lemma}\label{lem:nonneg}
Let $X$ be smooth, $n$-dimensional, projective variety. If 
\[
\Phi\cl X\dra \PP^{n-1}
\]
is any dominant fibration and $D\subset X$ is any effective divisor then $[C_\Phi]\cdot D\ge 0$.
\end{lemma}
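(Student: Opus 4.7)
The plan is to reduce to the case where $\Phi$ is a regular morphism by resolving indeterminacy, and then argue by hand that no irreducible component of a general fiber of that morphism can be contained in a prime component of $D$.

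First I would choose a resolution of indeterminacy $\mu\cl \tilde X\ra X$ with $\tilde X$ smooth projective such that $\tilde\Phi:=\Phi\circ\mu\cl \tilde X\ra \PP^{n-1}$ extends to an honest morphism. Because $\mu$ is a morphism of smooth varieties, $\tilde D := \mu^*D$ is a well-defined Cartier divisor, and effectivity of $D$ (through a defining section of $\Oc_X(D)$) forces $\tilde D$ to be effective as well. The indeterminacy locus of $\Phi$ has codimension at least $2$ in $X$, so for general $b\in\PP^{n-1}$ the fiber $\tilde C_b$ of $\tilde \Phi$ maps to $C_\Phi$ birationally on each of its irreducible components; hence $\mu_*[\tilde C_b]=[C_\Phi]$ as $1$-cycles, and the projection formula gives
\[
[C_\Phi]\cdot D \;=\; \mu_*[\tilde C_b]\cdot D \;=\; [\tilde C_b]\cdot \tilde D.
\]
It is therefore enough to prove $[\tilde C_b]\cdot \tilde D\ge 0$ for general $b\in\PP^{n-1}$.

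Write $\tilde D=\sum_{i=1}^r a_i E_i$ with $a_i>0$ and $E_i$ prime. For each fixed $i$, consider the restriction $\tilde\Phi|_{E_i}\cl E_i\ra \PP^{n-1}$. Either this map is non-dominant, in which case a general $b$ lies outside its image and $\tilde C_b\cap E_i=\emptyset$, or it is dominant and hence generically finite on $E_i$ (since $\dim E_i=n-1=\dim\PP^{n-1}$), in which case $\tilde C_b\cap E_i$ is a finite set of points. In either situation, once $b$ is chosen general enough to avoid the finitely many bad loci (one for each $i$), no component of $\tilde C_b$ is contained in any $E_i$. Hence each $[\tilde C_b]\cdot E_i$ is a nonnegative count of intersection points, and
\[
[\tilde C_b]\cdot \tilde D \;=\; \sum_i a_i\,[\tilde C_b]\cdot E_i \;\ge\; 0.
\]

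The main obstacle is really just the bookkeeping across the resolution: verifying that pulling back $D$ along $\mu$ preserves effectivity, and that the general-fiber class on $\tilde X$ really pushes forward to $[C_\Phi]$ as cycles. Both follow from smoothness together with the codimension-two bound on the indeterminacy locus, but deserve to be checked carefully. No deep moving lemma is required, since the family of fibers of $\tilde\Phi$ itself supplies all the flexibility needed.
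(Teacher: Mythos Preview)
Your argument is correct in spirit and very close to the paper's. The paper packages the resolution of indeterminacy as the graph $\Gamma_\Phi\subset X\times\PP^{n-1}$ and does all the intersection theory inside the smooth ambient $X\times\PP^{n-1}$: after a projection-formula manipulation they arrive at
\[
[C_\Phi]\cdot D \;=\; \deg\bigl((\Gamma_\Phi\cap \pi_1^{*}D)\ra\PP^{n-1}\bigr)\;\ge\;0,
\]
which is your ``no component of the general fiber lies in the divisor'' observation, read off on the $\PP^{n-1}$ side rather than on a resolution.

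One caveat worth flagging: the paper works over an arbitrary algebraically closed field, and you ask for a \emph{smooth} projective $\tilde X$ resolving the indeterminacy. In positive characteristic and dimension $\ge 4$ such a $\tilde X$ is not known to exist. You use smoothness when you split $\tilde D=\sum_i a_iE_i$ into prime components and pair $[\tilde C_b]$ with each $E_i$ separately; on a merely normal $\tilde X$ the $E_i$ need not be Cartier, so those individual numbers are not defined. The fix is tiny and keeps your geometry intact: take $\tilde X$ to be the graph (or its normalization), keep $\tilde D=\mu^{*}D$ as an effective \emph{Cartier} divisor, and run your dominance/dimension count on the finitely many irreducible components of $\Supp(\tilde D)$ to conclude that no component of the general fiber $\tilde C_b$ lies in $\Supp(\tilde D)$; then $\tilde D|_{\tilde C_b}$ is an effective Cartier divisor on a curve and hence has nonnegative degree. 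The paper's choice to stay inside $X\times\PP^{n-1}$ is simply another way to avoid ever needing $\tilde X$ smooth.
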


\begin{proof}
Let $b\in \PP^{n-1}$ be a general point and consider the graph of $\Phi$
\[
\Gamma_\Phi\subset X\times \PP^{n-1}.
\]
Let $\pi_i$ denote the projection of $X\times \PP^{n-1}$ onto each factor. Then
\[
[C_\Phi] = (\pi_1)_* (\Gamma_\Phi \cdot X\times b).
\]
Thus
\begin{align*}
[C_\Phi]\cdot D &= (\pi_1)_* (\Gamma_\Phi \cdot X\times b) \cdot D \\
&=(\Gamma_\Phi \cdot (X\times b)) \cdot \pi_1^*D \\
&= (\Gamma_\Phi\cdot \pi_2^*(b))\cdot \pi_1^*D\\
&= (\Gamma_\Phi \cdot \pi_1^*D) \cdot \pi_2^*(b)\\
&= (\pi_2)_*(\Gamma_\Phi \cdot \pi_1^*D)\cdot b.
\end{align*}
Lastly, $\Gamma_\Phi\cdot \pi_1^*D$ is an effective divisor on $\Gamma_\Phi$ (as the map $\Gamma_\Phi\ra X$ is dominant and $\Gamma_\Phi$ is irreducible). Thus
\begin{align*}
[C_\Phi]\cdot D & = (\pi_2)_*(\Gamma_\Phi \cdot \pi_1^*D)\cdot b\\
&= \deg((\Gamma_\Phi \cap \pi_1^*D)\ra \PP^{n-1})\\
&\ge 0.\qedhere
\end{align*}
\end{proof}

\begin{lemma}\label{beatthecurves}
Let $X$ be a smooth projective toric variety and let $[C]$ be an effective curve class that meets all effective, $T$-invariant divisors non-negatively. There exists a one-parameter curve $\phi_v$ such that for all globally generated line bundles $L$:
\[
 [C]\cdot L\ge [C_v] \cdot L.
\]
\end{lemma}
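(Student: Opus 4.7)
My plan is to find, for any $[C]$ in the hypothesis, an explicit $v \in N$ such that $[C] - [C_v]$ lies in the Mori cone of $X$; pairing with any globally generated (hence nef) line bundle $L$ then yields $[C]\cdot L \ge [C_v]\cdot L$ immediately.

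The argument splits into two ingredients. \textbf{First}, for every wall $\tau$ whose curve class $[V(\tau)]$ already satisfies $[V(\tau)]\cdot D_i \ge 0$ for all invariant prime divisors $D_i$, I would show $[V(\tau)] = [C_v]$ for a specific $v$. Let $\sigma^{\pm}$ be the two maximal cones meeting along $\tau$, with extra rays $u^\pm$, and wall relation $u^+ + u^- + \sum_j c_j v_j = 0$ in terms of the generators $v_1,\dots,v_{n-1}$ of $\tau$. Corollary~\ref{equalsone} together with the standard smooth wall-intersection formula gives $V(\tau)\cdot D_{v_j}=c_j$, so the nef-duality hypothesis forces $c_j \ge 0$ for all $j$; in particular $-u^+ = u^- + \sum_j c_j v_j \in \sigma^-$. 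Setting $v=u^+$, Proposition~\ref{intersectionNumbers} lets me compute $[C_v]\cdot D_i = -(\psi_{D_i}(u^+)+\psi_{D_i}(-u^+))$ on every invariant prime divisor, and the wall relation controls the piecewise-linear support function $\psi_{D_i}$ on $\sigma^\pm$ well enough to verify that these numbers match the known values of $V(\tau)\cdot D_i$. Because $\{[D_i]\}$ spans $\Pic(X)_\mathbb{Q}$ and numerical equivalence coincides with rational equivalence on a smooth projective toric variety, $[C_v]=[V(\tau)]$ in $\CH_1(X)$.

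\textbf{Second}, I would produce such a nef-like wall-curve beneath $[C]$ by $T$-equivariant degeneration: flowing a curve of class $[C]$ under a generic one-parameter subgroup of $T$ yields a flat limit $C_0$ that is an effective $T$-invariant cycle $\sum_\tau m_\tau V(\tau)$ with $m_\tau \in \mathbb{Z}_{\ge 0}$ and $[C_0]=[C]$. I would then argue that the hypothesis $[C]\cdot D_i \ge 0$ forces some appearing component $V(\tau^*)$ (with $m_{\tau^*}\ge 1$) to itself be nef-dual: otherwise every wall $\tau$ with $m_\tau\ge 1$ has some $D_i$ with $V(\tau)\cdot D_i<0$ dictated by the wall relation, and these negative contributions cannot simultaneously be absorbed while keeping $[C]\cdot D_i\ge 0$ for every $i$. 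Taking $v:=u^+$ for $\tau^*$ then gives $[C_v]=[V(\tau^*)]$ and
\[
[C]-[C_v] = (m_{\tau^*}-1)[V(\tau^*)] + \sum_{\tau\ne\tau^*} m_\tau [V(\tau)]
\]
is an effective $1$-cycle class, which completes the proof.

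The main obstacle is the second, combinatorial step: producing a nef-dual component with positive multiplicity in the $T$-invariant flat limit. A careful analysis of how the negative intersection numbers of non-nef walls are controlled by the wall relations, and of which components can actually appear in the limit, is where most of the work lies.
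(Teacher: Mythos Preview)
Your second step is not just difficult --- it is false as stated. Take $X$ to be the toric del Pezzo surface of degree~6, i.e.\ the blow-up of $\PP^2$ at the three torus-fixed points. The six $T$-invariant irreducible curves are the three exceptional divisors $E_1,E_2,E_3$ and the three strict transforms $L_{ij}$ of the coordinate lines (class $H-E_i-E_j$); they form a hexagon of $(-1)$-curves. Let $[C]=H$ be the class of a general line. Then $[C]\cdot E_i=0$ and $[C]\cdot L_{ij}=1$, so $[C]$ satisfies the hypothesis. However, an elementary computation in $\Pic(X)$ shows that the \emph{only} ways to write $H$ as a nonnegative integral combination of the classes $[E_i],[L_{ij}]$ are
\[
H=[L_{ij}]+[E_i]+[E_j]\qquad(\text{some }i\ne j),
\]
and every summand here has self-intersection $-1$, hence is \emph{not} nef-dual. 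Thus \emph{no} $T$-equivariant degeneration of a general line has a nef-dual irreducible component, and your extraction of $V(\tau^*)$ cannot be carried out. (Your first step is fine, but it never gets used.)

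The paper's argument avoids this obstruction entirely and is much shorter. One simply chooses any $T$-invariant prime divisor $D$ with $a:=[C]\cdot D>0$ (such $D$ exists since $[C]$ is effective and $X$ is projective) and lets $v\in N$ be the primitive generator of the ray of $D$; this is the $v$ that works \emph{uniformly} for all globally generated $L$. The point is that the one-parameter curve $\phi_v$ meets the toric boundary only at the generic point of $D$ and at the distinguished point $x_\sigma$ of the smallest cone $\sigma$ containing $-v$. Given any globally generated $L$, pick a section $\chi^m\in H^0(X,L)$ not vanishing at $x_\sigma$, so that in $\div_L(\chi^m)=\sum b_iD_i$ every $D_i$ through $x_\sigma$ has $b_i=0$. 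Then by Corollary~\ref{equalsone} one gets $[C_v]\cdot L=b$, the coefficient of $D$, whereas
\[
[C]\cdot L=\sum_i b_i\bigl([C]\cdot D_i\bigr)\ \ge\ b\,([C]\cdot D)=ab\ \ge\ b=[C_v]\cdot L,
\]
using only the nonnegativity hypothesis on $[C]$. No degeneration, no Mori-cone combinatorics, and no attempt to make $[C]-[C_v]$ effective are needed.
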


\begin{example} Here we illustrate the proof of Lemma~\ref{beatthecurves} by an example. To start we are given a curve $C$ with prescribed intersection numbers with the torus-invariant divisors, as in the left figure. Consider any irreducible $T$-divisor $D$ that has positive intersection number with $C$ (in this case the top right divisor). From this divisor we construct a one-parameter curve $C_v$. This has intersection multiplicity 1 with $D$ and meets exactly one other stratum of the toric variety (in our example, the intersection of the bottom and left curves, as illustrated in the middle figure). Given any globally generated line bundle there is a section that does not vanish at the distinguished point of the stratum (in this example we take the section $m$ in the right figure), the corresponding divisor is particularly convenient to intersect with $[C_v]$ as most of the terms vanish.
\begin{center}
\begin{figure}[H]
\begin{tabular}{ p{2.5cm} p{4.5cm} p{4cm} p{4cm}}

\begin{minipage}{2.5cm}
\begin{tikzpicture}[scale=0.6]
  \draw[line width=1.5mm, dashed, black] (0,0)--(1.5,0);
  \node at (-1.5,0) {$\div(\chi^m)$};
  \draw[line width=1mm, loosely dotted, magenta] (0,1.5)--(1.5,1.5);
  \node at (-1.5,1.5) {$C_v$};
  \draw[
  rounded corners, densely dotted, line width=1mm, blue] (0,3)--(1.5,3);
  \node at (-1.5,3) {$C$};
\end{tikzpicture}
\end{minipage}
&

\quad\begin{minipage}{4cm}
\begin{tikzpicture}[scale=0.75]
  \fill[yellow!30] (0,0) -- (2,0) -- (2,2) -- (1,3) -- (0,3) -- cycle;

  \draw[rounded corners, densely dotted, line width=1mm, blue] (1,0) -- (2,.5) -- (0,1.5) -- (1.5,2.5) -- (0,2.5) -- (2,1.5) -- (0,.5) -- cycle;

  \draw[line width=.7mm, orange] (0,0) -- (2,0) -- (2,2) -- (1,3) -- (0,3) -- cycle;
  
  \foreach \x in {-1,...,3}{
    \foreach \y in {-1,...,4}{
      \fill[black!65] (\x,\y) circle (0.05);
    }
  }

  \fill[black] (0,0) circle (0.15);
  \fill[black] (2,0) circle (0.15);
  \fill[black] (2,2) circle (0.15);
  \fill[black] (1,3) circle (0.15);
  \fill[black] (0,3) circle (0.15);

  \node at (2.3,1) {$2$};
  \node at (1.75,2.75) {$1$};
  \node at (1,-.4) {$1$};
  \node at (-.3,1.5) {$3$};
  \node at (.5,3.4) {$0$};
\end{tikzpicture}
\end{minipage}

&

\begin{minipage}{4.5cm}
\begin{tikzpicture}[scale=0.75]
  \fill[yellow!30] (0,0) -- (2,0) -- (2,2) -- (1,3) -- (0,3) -- cycle;

  \draw[line width=1mm, loosely dotted, magenta] (0,0) -- (1.5,2.5);

  \draw[line width=.7mm, orange] (0,0) -- (2,0) -- (2,2) -- (1,3) -- (0,3) -- cycle;
  
  \foreach \x in {-1,...,3}{
    \foreach \y in {-1,...,4}{
      \fill[black!65] (\x,\y) circle (0.05);
    }
  }

  \fill[black] (0,0) circle (0.15);
  \fill[black] (2,0) circle (0.15);
  \fill[black] (2,2) circle (0.15);
  \fill[black] (1,3) circle (0.15);
  \fill[black] (0,3) circle (0.15);

  \node at (2.3,1) {$0$};
  \node at (1.75,2.75) {$1$};
  \node at (1,-.4) {$>0$};
  \node at (-.6,1.5) {$>0$};
  \node at (.5,3.4) {$0$};
 
\end{tikzpicture}

\end{minipage}

&

\begin{minipage}{4cm}
\begin{tikzpicture}[scale=0.75]
  \fill[yellow!30] (0,0) -- (2,0) -- (2,2) -- (1,3) -- (0,3) -- cycle;

  \draw[line width=.7mm, orange] (0,0) -- (2,0) -- (2,2) -- (1,3) -- (0,3) -- cycle;

  \draw[line width=1.5mm, dashed, black] (2,0) -- (2,2) -- (1,3) -- (0,3);
  
  \foreach \x in {-1,...,3}{
    \foreach \y in {-1,...,4}{
      \fill[black!65] (\x,\y) circle (0.05);
    }
  }

  \fill[black] (0,0) circle (0.15);
  \fill[black] (2,0) circle (0.15);
  \fill[black] (2,2) circle (0.15);
  \fill[black] (1,3) circle (0.15);
  \fill[black] (0,3) circle (0.15);

  \node at (2.3,1) {$2$};
  \node at (1.75,2.75) {$4$};
  \node at (.5,3.4) {$3$};
  \node at (-.4,-.4) {$m$};
\end{tikzpicture}
\end{minipage}
\end{tabular}
\end{figure}
\end{center}

\vspace{-36pt}

\noindent By direct comparison we complete the proof, in this example we have:
\begin{align*}
[C]\cdot L =& [C]\cdot \div(\chi^m)\\
=& 1\cdot 0 + 2\cdot 2+1\cdot 4+0\cdot 3+3\cdot 0\\
\ge & (>0)\cdot 0 + 0 \cdot 2+1\cdot 4+0\cdot 3+(>0)\cdot 0\\
= &[C_v] \cdot L.
\end{align*}

\end{example}

\begin{proof}[Proof of Lemma~\ref{beatthecurves}]
Let $D_1\dots D_r$ be the $T$-invariant divisors. As the curve class $[C]$ corresponds to an effective curve and $X$ is projective, it cannot intersect all the effective $T$-divisors to multiplicity $0$. Assume that
\[
[C]\cdot D = a>0
\]
for one such $T$-invariant prime divisor $D$.

Let $v\in N$ be the integral generator of the ray corresponding to $D$. This gives rise to a one-parameter curve
\[
\phi_v:\PP^1\ra X.
\]
Let $\sigma$ be the smallest cone containing $-v\in N$. Then the image of $0\in \PP^1$ is the distinguished point of $D$ and the image of $\infty \in \PP^1$ is the distinguished point of $\sigma$. These are the only intersection points of $C_v$ with the $T$-invariant divisors of $X$.

As $L$ is globally generated there is a character $m\in M$ such that $\chi^m\in \Gamma(X,L)$ and $\chi^m$ does not vanish at the distinguished point of $\sigma$. Then
\[
\div(\chi^m) =b_1 D_1+\cdots bD+\cdots+b_r D_r \quad (b,b_i \ge 0).
\]
As $\chi^m$ does not vanish at the distinguished point of $\sigma$, it follows that for every $D_i$ such that $x_\sigma\in D_i$ the coefficient $b_i=0$. So, as $D$ is the unique prime divisor containing its distinguished point, by Corollary~\ref{equalsone}:
\[
[C_v] \cdot L = [C_v]\cdot (b D) = b.
\]
Therefore:
\begin{align*}
[C]\cdot L =& [C]\cdot \div(\chi^m)&\\
=& [C] \cdot (b_1 D_1+\cdots +bD+\cdots +b_r D_r)&\\
= & [C] \cdot (b_1 D_1)+\cdots + [C]\cdot (bD) +\cdots +[C]\cdot (b_r D_r)&\\
\ge & [C] \cdot (bD) &\text{(by assumption on $[C]$)}\\
=& b\cdot a  \ge b\cdot 1 =  [C_v]\cdot L.&\qedhere
\end{align*}

\end{proof}

\begin{remark}
Interestingly, for any curve class $0\ne [C]\in \CH_1(X)$ such that $[C]\cdot D\ge 0$ for all effective $T$-invariant divisors $D$, Payne constructs (\cite[Prop. 2]{Payne}) a family of curves with class $[C]$ that sweeps out $X$.
\end{remark}

\begin{definition}\label{def:lw}
Let $P\subset M_\RR$ be a lattice polytope (that is, the convex hull of finitely many point in $M$). For any nonzero $v\in N$, the \textit{lattice width of $P$ in the direction of $v$} is 
\[
\lw_v(P) := \text{width}\left(\{ \langle x, v \rangle\in \RR | x \in P \}\right).
\]
In other words, $v$ induces a linear projection (mapping $M$ to $\ZZ$) from $P$ to $\RR$, and $\lw_v(P)$ is the width of the image. The \textit{lattice width of $P$} is simply:
\[
\lw(P) := \min \{ \lw_v(P) \mid 0\ne v \in N\}.
\]
\end{definition}

\begin{example}
For the figure in the introduction, the lattice width is 6 (which the map in the introduction realizes). To argue this, note that there are 8 lattice points on a line in $P$ that are all identified under this horizontal projection. Any projection that does not identify these points, must have width at least 7.
\end{example}

\begin{proposition}\label{prop:lwcomp}
    Let $D$ be a globally generated torus-invariant divisor and $v\in N.$ Then
\[
 [C_v] \cdot D = \lw_v(P(X,\Oc_X(D)).
\]
\end{proposition}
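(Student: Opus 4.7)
The plan is to chain together the two identities already established in the preliminaries: the intersection number $[C_v]\cdot D$ in terms of the support function $\psi_D$ (Proposition~\ref{intersectionNumbers}), and the support function in terms of the polytope $P=P(X,\Oc_X(D))$ (Remark~\ref{rem:polytope_to_support}). Since the lattice width in direction $v$ is by definition the difference between the maximum and minimum of the linear functional $\langle \cdot,v\rangle$ on $P$, everything reduces to matching these two expressions.

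First, I apply Proposition~\ref{intersectionNumbers} to write
\[
[C_v]\cdot D \;=\; -\psi_D(v)-\psi_D(-v).
\]
Then, since $D$ is globally generated, the support function is recovered from the polytope by
\[
\psi_D(w)\;=\;\min_{m\in P}\langle m,w\rangle,
\]
by Remark~\ref{rem:polytope_to_support}. Substituting $w=v$ and $w=-v$ and using $\min_{m\in P}\langle m,-v\rangle=-\max_{m\in P}\langle m,v\rangle$, I obtain
\[
[C_v]\cdot D \;=\; -\min_{m\in P}\langle m,v\rangle \;+\;\max_{m\in P}\langle m,v\rangle \;=\; \max_{m\in P}\langle m,v\rangle-\min_{m\in P}\langle m,v\rangle.
\]
The right-hand side is precisely the width of the image of $P$ under the linear projection $\langle\cdot,v\rangle\cl M_\RR\to\RR$, which is $\lw_v(P)$ by Definition~\ref{def:lw}. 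This completes the identification.

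There is really no serious obstacle: both ingredients are already packaged in the preliminaries, and the proof is essentially a one-line substitution. The only subtlety worth flagging is the global-generation hypothesis on $D$, which is exactly what is needed for Remark~\ref{rem:polytope_to_support} to apply (so that $\psi_D$ is recovered as the support function of $P$). Without this hypothesis the polytope $P(X,\Oc_X(D))$ need not determine $\psi_D$, and the statement could fail.
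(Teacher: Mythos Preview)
Your proof is correct and follows essentially the same approach as the paper: both combine Proposition~\ref{intersectionNumbers} with the polytope description of $\psi_D$ from Remark~\ref{rem:polytope_to_support} to identify $-(\psi_D(v)+\psi_D(-v))$ with $\max_{m\in P}\langle m,v\rangle-\min_{m\in P}\langle m,v\rangle=\lw_v(P)$. Your remark about the role of the global-generation hypothesis is also apt.
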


\begin{proof}
    By Remark~\ref{rem:polytope_to_support}, for each $u\in N$ we have $$\psi_D(u) = \min_{x\in P(X,\Oc_X(D))} \langle x, u\rangle.$$ It follows that
    $$\lw_v(P(X,\Oc_X(D)) = \max_{x\in P(X,\Oc_X(D))} \langle x, v\rangle - \min_{x\in P(X,\Oc_X(D))} \langle x, v\rangle = - (\psi_D(v) + \psi_D(-v)).$$ So, we are done by Proposition~\ref{intersectionNumbers}.
\end{proof}

\begin{proof}[Proof of Theorem~\ref{thm:A}]
Assume $L=\Oc_X(D)$ for some $T$-invariant divisor $D$. If $X$ is not smooth, take a projective, toric resolution of singularities of $X$ to obtain
\[
\mu\cl X'\ra X.
\]
Now the pair $(X',\mu^*L)$ satisfies
\[
P(X',\mu^*L) = P(X,L),
\]
and $\mu^*L$ is globally generated. Finally for any rational fibration:
\[
\Phi\cl X\dra \PP^{n-1}
\]
there is an induced rational fibration
\[
\Phi'\cl X'\dra \PP^{n-1} 
\]
and $\Phi'$ satisfies
\[
\mu_*[C_{\Phi'}] = [C_\Phi].
\]
So by the projection formula:
\[
\mfd(X,L) = \mfd(X',\mu^*L).
\]
Therefore, it suffices to prove Theorem~\ref{thm:A} in the case $X$ is smooth.

By Lemma~\ref{lem:vmapsexist} for any nonzero $v\in N$, there is a torus equivariant fibration:
\[
\Phi_v \cl X\dra \PP^{n-1}
\]
such that $[C_{\Phi_v}] \equiv_{\mathrm{rat}} [C_v]$. By Proposition~\ref{prop:lwcomp},
\[
\deg_L(\Phi_v) = [C_v]\cdot D = \lw_v(P(X,L)).
\]
Thus, by the definitions:
\[
\mfd(X,L) \le \lw(P(X,L)).
\]

On the other hand, by Lemma~\ref{lem:nonneg}, if
\[
\Phi\cl X\dra \PP^{n-1}
\]
is any fibration, then $[C_\Phi]\cdot E\ge 0$ for any effective divisor $E$ on $X$. Then by Lemma~\ref{beatthecurves}, there exists a $v\in N$ such that
\[
[C_v]\cdot D \le [C_\Phi]\cdot D = \deg_L(\Phi).
\]
Therefore (by Proposition~\ref{prop:lwcomp}):
\[
\lw(P(X,L)) \le \lw_v(P(X,L)) = [C_v]\cdot D\le \deg_L(\Phi).
\]
As $\Phi$ is arbitrary this gives $\lw(P(X,L))\le \mfd(X,L)$ which completes the proof.
\end{proof}

\begin{remark}
The proof of Theorem~\ref{thm:A} shows that if $X$ is smooth there is a finite list of rational fibrations:
\[
\mathcal{S} = \{ \Phi_v\cl X\dra \PP^{n-1} \mid v\in \Sigma\text{ generates a one-dimensional cone}\}
\]
such that for any globally generated line bundle $L$:
\[
\mfd(X,L) = \min\{ \deg_L(\Phi_v) \mid \Phi_v\in \mathcal{S} \}.
\]
Similarly one can compute the lattice width of $P(X,L)$ by minimizing over generators of the one-dimensional cones of $\Sigma.$
\end{remark}

\bibliographystyle{siam} 
\bibliography{Biblio}

\begin{thebibliography}{10}

\bibitem{CasCoo}
{\sc W.~Castryck and F.~Cools}, {\em Newton polygons and curve gonalities}, J.
  Algebraic Combin., 35 (2012), pp.~345--366.

\bibitem{CasCoo2}
\leavevmode\vrule height 2pt depth -1.6pt width 23pt, {\em Linear pencils
  encoded in the {N}ewton polygon}, Int. Math. Res. Not. IMRN,  (2017),
  pp.~2998--3049.

\bibitem{BettiTables}
{\sc W.~Castryck, F.~Cools, J.~Demeyer, and A.~Lemmens}, {\em Computing graded
  {B}etti tables of toric surfaces}, Trans. Amer. Math. Soc., 372 (2019),
  pp.~6869--6903.

\bibitem{CharBuzFesch}
{\sc {\'E}.~Charrier, L.~Buzer, and F.~Feschet}, {\em Efficient lattice width
  computation in arbitrary dimension}, in Discrete Geometry for Computer
  Imagery, S.~Brlek, C.~Reutenauer, and X.~Proven{\c{c}}al, eds., Berlin,
  Heidelberg, 2009, Springer Berlin Heidelberg, pp.~46--56.

\bibitem{CoolsLemmens}
{\sc F.~Cools and A.~Lemmens}, {\em Minimal polygons with fixed lattice width},
  Ann. Comb., 23 (2019), pp.~285--293.

\bibitem{DraiMcallNill}
{\sc J.~Draisma, T.~B. McAllister, and B.~Nill}, {\em Lattice-width directions
  and {M}inkowski's {$3^d$}-theorem}, SIAM Journal on Discrete Mathematics, 26
  (2012), pp.~1104--1107.

\bibitem{Fulton}
{\sc W.~Fulton}, {\em Introduction to Toric Varieties. (AM-131), Volume 131},
  Princeton University Press, Princeton, 1993.

\bibitem{Kaw16}
{\sc R.~Kawaguchi}, {\em The gonality and the {C}lifford index of curves on a
  toric surface}, Journal of Algebra, 449 (2016), pp.~660--686.

\bibitem{LSU23}
{\sc J.~Levinson, D.~Stapleton, and B.~Ullery}, {\em Minimal degree fibrations
  in curves and the asymptotic degree of irrationality of divisors}, 2023.

\bibitem{LubSch}
{\sc N.~Lubbes and J.~Schicho}, {\em Lattice polygons and families of curves on
  rational surfaces}, J. Algebraic Combin., 34 (2011), pp.~213--236.

\bibitem{Payne}
{\sc S.~Payne}, {\em Stable base loci, movable curves, and small modifications,
  for toric varieties}, Math. Z., 253 (2006), pp.~421--431.

\end{thebibliography}

\end{document}